\newcommand{\loc}{\mr{loc}\,}
\newcommand{\T}{\mathbb{T}}
\newcommand{\N}{\mathbb{N}}
\newcommand{\Z}{\mathbb{Z}}
\newcommand{\R}{\mathbb{R}}
\newcommand{\C}{\mathbb{C}}
\newcommand{\SL}{\mr{SL}}
\newcommand{\SU}{\mr{SU}}
\newcommand{\su}{\mathfrak{su}}
\newcommand{\mf}{\mathfrak}
\newcommand{\mr}{\mathrm}
\newcommand{\mc}{\mathcal}
\newcommand{\End}{\mathop{\rm End}\nolimits}
\newcommand{\Id}{\mathop{\rm Id}\nolimits}
\renewcommand{\ker}{\mathop{\rm ker}\nolimits}
\newcommand{\ind}{\mathop{\rm ind}\nolimits}
\renewcommand{\Re}{\mathop{\rm Re}\nolimits}
\renewcommand{\Im}{\mathop{\rm Im}\nolimits}
\renewcommand{\deg}{\mathop{\rm deg}\nolimits}
\newcommand{\Tr}{\mathop{\rm Tr}\nolimits}
\renewcommand{\mod}{\mathop{\rm mod}\nolimits}
\newcommand{\del}{\partial}
\newcommand{\delb}{\bar{\partial}}
\newcommand{\note}[1]{\marginpar{\raggedright\if@twoside\ifodd\c@page\raggedleft\fi\fi\sf\scriptsize \red{RMK: #1}}}
\newcommand\red[1]{\textcolor{red}{#1}}
\newcommand{\be}{\begin{equation}}
\newcommand{\ben}{\begin{equation}\nonumber}
\newcommand{\ee}{\end{equation}}
\newcommand{\bp}{\begin{para}}
\newcommand{\ep}{\end{para}}
\newcommand{\fid}{\mathrm{fid}}
\def\D{\mathbb{D}}
\newsavebox{\dotbox}
\newtheorem{proposition}{\textbf{Proposition}}
\newtheorem{lemma}[proposition]{\textbf{Lemma}}
\newtheorem{theorem}[proposition]{\textbf{Theorem}}
\theoremstyle{definition}
\newtheorem{definition}{\textbf{Definition}}
\newtheorem*{example*}{\textbf{Example}}
\theoremstyle{remark}      
\newtheorem*{rem*}{Remark}
\newcounter{para}[section]
\newenvironment{para}[2][]{\refstepcounter{para}\noindent\ignorespaces{\bf #1\thepara. #2.} \rmfamily}{\noindent\ignorespacesafterend\bigskip}
\numberwithin{proposition}{section}
\numberwithin{definition}{section}
\begin{document}
\title[The Hitchin fibration under degenerations to noded  surfaces]{The Hitchin fibration under degenerations to noded Riemann surfaces}
\date{\today}


\author{Jan Swoboda}
\address{Mathematisches Institut der Universit\"at M\"unchen\\Theresienstra{\ss}e 39\\D--80333 M\"unchen\\ Germany}
\email{swoboda@math.lmu.de}



\maketitle

\begin{abstract}
In this note we study some analytic properties of the linearized self-duality equations on a family of smooth Riemann surfaces $\Sigma_R$ converging for $R\searrow0$ to a surface $\Sigma_0$  with a finite number of nodes. It is shown that the linearization along the fibres of the Hitchin fibration $\mathcal M_d\to\Sigma_R$  gives rise to a graph-continuous Fredholm family, the index of it being stable   when passing to the limit.  We also report on similarities and differences between properties of the Hitchin fibration in   this degeneration  and in the limit of large Higgs fields as studied in \cite{msww14}.
\keywords{Hitchin fibration \and  self-duality equations \and  noded Riemann surface}
\end{abstract}

\section{Introduction}\label{sec:intro}

Let $\Sigma$ be a closed Riemann surface with complex structure $J$. We fix a hermitian vector bundle $(E,h)\to \Sigma$ of rank $2$ and degree $d(E)\in\Z$. 
We furthermore fix a K\"ahler metric on $\Sigma$ such that the associated K\"ahler form $\omega$ satisfies $\int_{\Sigma}\omega=2\pi$. The main object of this article is the moduli space  of  solutions $(A,\Phi)$ of Hitchin's self-duality equations 
\be
\label{hit.equ.uni}
\begin{array}{rcl}
F_A+[\Phi\wedge\Phi^*] & = & -i\mu(E)\operatorname{id}_E\omega, \\[0.4ex]
\delb_A\Phi & =& 0
\end{array}
\ee
for a unitary connection  $A\in\mc A(E,h)$ and  a {\bf Higgs field} $\Phi \in \Omega^{1,0}(\Sigma,\End(E))$. We here denote by $\mu(E)=d(E)/2$ the slope of the complex rank-$2$ vector bundle $E$.
\medskip\\
The set of smooth solutions to Eq.~\eqref{hit.equ.uni} is invariant under the action  $g^{\ast}(A,\Phi)=(g^{\ast}A,g^{-1}\Phi g)$, where $g\in \mathcal G(E,h)$ is a unitary gauge transformation. The corresponding moduli space of solutions
\begin{equation*}
\mathcal M:= \frac{\{(A,\Phi)\in\mathcal A(E,h)\times\Omega^{1,0}(\Sigma,\End(E))\mid \eqref{hit.equ.uni}\}}{\mathcal G(E,h)}
\end{equation*}
has been widely studied in the literature, beginning with Hitchin's seminal work \cite{hi87}.  We note that for the definition of $\mathcal M$ the precise choice of the  background hermitian metric $h$ is irrelevant, since any two  hermitian metrics are complex gauge equivalent.  The dependence of  \eqref{hit.equ.uni} and hence  $\mathcal M$ on the degree $d$ of the complex vector bundle $E$ is supressed from our notation.\\
\medskip\\ 
One interesting feature of $\mathcal M$ is the existence of a singular torus fibration over the complex vector space $\operatorname{QD}(\Sigma,J)$ of holomorphic quadratic differentials, called {\bf{Hitchin fibration}},
\begin{equation*}
\det\colon \mathcal M\to \operatorname{QD}(\Sigma,J),\quad [(A,\Phi)]\mapsto\det\Phi.
\end{equation*}
The   map $\det$ is well-defined since $(A,\Phi)$ is supposed to satisfy $\bar\partial_A\Phi=0$. It is holomorphic with respect to the natural complex structure on $\operatorname{QD}(\Sigma,J)$ and the one on $\mathcal M$ given by
\begin{equation*}
I(\alpha,\varphi)=(i\alpha,i\varphi)
\end{equation*}
for a pair $(\alpha,\varphi)\in \Omega^{0,1}(\Sigma,\mathfrak{sl}(E))\oplus  \Omega^{1,0}(\Sigma,\mathfrak{sl}(E))$ representing a tangent vector at some   $[(A,\Phi)]\in\mathcal M$. It is furthermore shown in \cite[\textsection 8]{hi87} that the map $\det$ is surjective and proper with preimage $\det^{-1}(q)$ (for $q$ with simple zeroes) being biholomorphically equivalent to the Prym variety of the double covering of $\Sigma$ determined by $\sqrt{q}$.\\
\medskip\\
The recent works \cite{msww14,msww15} and \cite{sw15} have been concerned with two types of degenerations of $\mathcal M$ which are of quite different nature. In the first two mentioned articles the degeneration profile of solutions $(A,\Phi)$ of  Eq.~\eqref{hit.equ.uni} has been studied in the limit of large Higgs fields, i.e.~in the case where the $L^2$-norm of $\Phi$ tends to infinity. Since the map $[(A,\Phi)]\to\|\Phi\|_{L^2(\Sigma)}^2$ on $\mathcal M$ is well-known to be proper, this analysis was used in \cite{msww14} to obtain a geometric compactification of $\mathcal M$ together with a parametrization of  charts covering  a large portion of the boundary of the compactified moduli space (corresponding to Higgs fields $\Phi$ satisfying the generic property that   $\det\Phi$ has only simple zeroes). While so-far the Riemann surface $(\Sigma,J)$ has been fixed, we consider in \cite{sw15} $\mathcal M$ as being parametrized by the complex structure $J$ and in particular focus on families of Riemann surfaces converging to a noded limit (this limit representing a boundary point of the Deligne-Mumford compactification of Teichm\"uller moduli space).  Our main result, a gluing theorem, is described in further detail below.\\
\medskip\\
The purpose of this note is to describe some geometric aspects of the Hitchin fibration under each of the above two degenerations. Concerning the first one, the results discussed here have been obtained in collaboration with Mazzeo, Wei{\ss} and Witt and will in full detail be presented in the forthcoming article \cite{msww16}, which is concerned with the large scale geometry of the natural complete $L^2$ hyperk\"ahler  metric on $\mathcal M$. Concerning the second degeneration, we here study the linearization of the self-duality equations \eqref{hit.equ.uni}, which gives rise to a family of (nonuniformly) elliptic operators degenerating to a so-called $b$-operator in the limit of a noded surface. This situation is not unlike to the one studied by Mazzeo and the author in \cite{masw16}, where the existence of a complete polyhomogeneous expansion of the Weil-Petersson metric on the Riemann moduli space of conformal structures was shown.  Further results concerning similar geometric aspects of this latter degeneration will be presented elsewhere.

\bigskip

\centerline{\textbf{Acknowledgements}}

The author  would like to thank Hartmut Wei{\ss} for   a number of valuable comments and useful discussions.

\smallskip

\section{Preliminaries on the moduli space of solutions to Hitchin's self-duality equations}

%
\subsection{Hitchin's  self-duality equations}\label{sect:hitchineq}

We recall some basic facts concerning Higgs bundles and the self-duality equations  \eqref{hit.equ.uni}, referring to   \cite{msww14} for  further details. First we note that the second equation  in \eqref{hit.equ.uni} implies that any solution $(A,\Phi)$ determines a {\bf Higgs bundle} $(\delb,\Phi)$, i.e.\ a holomorphic structure $\delb=\delb_A$ on $E$ for which $\Phi$ is holomorphic: $\Phi\in   H^0(\Sigma,\End(E)\otimes K_{\Sigma})$, $K_{\Sigma}\cong T_{\C}^{\ast}\Sigma$ denoting  the canonical line bundle of $\Sigma$.   Conversely, given a Higgs bundle $(\delb,\Phi)$, the operator $\delb$ can be augmented to a unitary connection $A$ such that the first Hitchin equation holds provided $(\delb,\Phi)$ is {\bf stable}. Stability here means that $\mu(F)<\mu(E)$ for any  $\Phi$-invariant holomorphic line-bundle $F$, that is, $\Phi(F)\subset F\otimes K_{\Sigma}$.
\medskip\\
In the following notation such as $\mathfrak u(E)$ refers to the bundle of endomorphisms of $E$ which are hermitian with respect to the fixed hermitian metric $h$ on $E$.  This bundle splits as $\mf{su}(E)\oplus i\underline{\R}$,   the splitting being induced by  the Lie algebra splitting $\mf{u}(2)\cong \mf{su}(2)\oplus \mf{u}(1)$ into trace-free and pure trace summands. Consequently, the curvature $F_A$ of a unitary connection  $A$ decomposes as
$$
F_A = F_A^\perp+\frac{1}{2}\Tr(F_A)\otimes\operatorname{id}_E,
$$
where $F_A^\perp\in\Omega^2(\Sigma,\mf{su}(E))$ is its   trace-free part  and $\frac{1}{2}\Tr(F_A) \otimes\operatorname{id}_E$ is the   pure trace  or   central part, see e.g.~\cite{po92}. Note that $\Tr(F_A) \in \Omega^2(i\underline{\R})$ equals the curvature of the induced connection on the determinant line bundle $\det E$. From now on, we fix a background connection $A_0\in\mc A(E,h)$   and consider only those connections $A$ which induce the same connection on $\det E$ as $A_0$ does. Equivalently, such a connection $A$ is of the form $A=A_0 + \alpha$ where $\alpha\in\Omega^1(\Omega,\mf{su}(E))$, i.e.~$A$ is trace-free ``relative'' to $A_0$. We choose the  unitary background connection $A_0$ on $E$ such that $\Tr F_{A_0}=-i\deg(E)\omega$. This decomposition allows us to  replace  Eq.~\eqref{hit.equ.uni} with the slightly easier  system of equations 
\be\label{eq:hitequ}
\begin{array}{rcl}
F_A^\perp+[\Phi\wedge\Phi^*] & = & 0, \\[0.4ex]
\delb_A\Phi & =& 0
\end{array}
\ee
for $A$ trace-free relative to $A_0$ and a trace-free Higgs field $\Phi\in \Omega^{1,0}(\Sigma,\End_0(E))$.    The relevant  groups of gauge transformations in this fixed determinant case are   $\mc G^c=\Gamma(\Sigma,\SL(E))$ and $\mc G=\mathcal G(E,h)=\Gamma(\Sigma,\SU(E,h))$, the former being the complexification of the latter.

\subsection{The limit of large Higgs fields}\label{subsec:limitlargeHiggs}

We discuss the results obtained in \cite{msww14} concerning the degeneration of solutions $(A,\Phi)$ of Eq.~\eqref{eq:hitequ} as $\|\Phi\|_{L^2(\Sigma)}\to\infty$. For this purpose we introduce the {\bf{rescaled self-duality equations}}
\be\label{eq:hitequresc}
\begin{array}{rcl}
F_A^\perp+t^2[\Phi\wedge\Phi^*] & = & 0, \\[0.4ex]
\delb_A\Phi & =& 0
\end{array}
\ee
for some parameter $t>0$.

%
\subsubsection*{Existence of limiting configurations}\label{limi.prym}
We describe the  following local model for degenerations of solutions to Eq.~\eqref{eq:hitequresc}  as $t\to\infty$.  On $\C$ we consider the pair  $(A_t^{\fid}, \Phi_t^{\fid})$ given in coordinates $z=re^{i\theta}$ by
\begin{equation*}
A_t^{\fid}=\left(\frac{1}{8}+\frac{1}{2}\frac{\partial h_t}{\partial r}\right)\begin{pmatrix}1&0\\0&-1\end{pmatrix}\left(\frac{dz}{z}-\frac{d\bar z}{\bar z}\right),\quad 
\Phi_t^{\fid}=\begin{pmatrix}0&r^{\frac{1}{2}}e^{h_t}\\ r^{\frac{1}{2}}e^{-i\theta}e^{-h_t}&0\end{pmatrix}\, dz.
\end{equation*}
It automatically satisfies the second equation in \eqref{eq:hitequresc}. The  
 function $h_t\colon(0,\infty)\to\R$ is determined in such a way that also the first equation holds. The existence and precise  properties  of the $1$-parameter family of  functions  $h_t$ have been discussed in \cite{msww14,msww15}. We only mention here that it is smooth on $(0,\infty)$, has a logarithmic pole at $r=0$ and satisfies $\lim_{t\to\infty} h_t=0$ uniformly on compact subsets of $(0,\infty)$. Note that in this ansatz the determinant $\det \Phi_t^{\fid}=-z\, dz^2$ is independent of $t$ and has a simple zero at $z=0$.  We also point out that for $t\to\infty$ the family of smooth solutions $(A_t^{\fid}, \Phi_t^{\fid})$ has a limit which is singular in $z=0$ and which satisfies the decoupled self-duality equations \eqref{eq:decoupledsd} below.
Here and in the following we denote
\begin{equation*}
\Sigma^{\times}:=\Sigma\setminus q^{-1}(0)
\end{equation*}
for some given Higgs field $\Phi$ where $q=\det\Phi$. We call the Higgs field {\bf{simple}} if $q$ has only simple zeroes, the number of which then equals $4(\gamma-1)$.

\begin{definition}\label{lim.conf.equ}
A {\bf limiting configuration} is a   pair 
$(A_\infty,\Phi_\infty)$ such that $\Phi_{\infty}$ is simple and  which satisfies the decoupled self-duality equations 
\begin{equation}\label{eq:decoupledsd}
F_{A_\infty}^\perp=0,\quad[\Phi_\infty\wedge\Phi_\infty^*]=0,\quad\delb_{A_\infty}\Phi_\infty=0
\end{equation}
on $\Sigma^\times$, and which furthermore agrees with $(A_{\infty}^{\fid}, \Phi_{\infty}^{\fid})$ near each point of $(\det\Phi)^{-1}(0)$ with respect to some unitary frame for $E$ and local holomorphic coordinate system such that $\det\Phi=-z\,dz^2$. 
\end{definition}

The main result for limiting configurations is the following.

\begin{theorem}\label{ex.out.sol}
Let $(E,h,\Phi)$ be a hermitian Higgs bundle  with simple Higgs field. Let $A_h$ denote the Chern connection associated with $(E,h)$. Then in the complex gauge orbit of $(A_h,\Phi)$ over 
$\Sigma\setminus(\det\Phi)^{-1}(0)$ there exists a limiting configuration $(A_\infty,\Phi_\infty)=g_{\infty}^{\ast}(A_h,\Phi)$. It is unique up to a unitary gauge transformation. The limiting complex gauge transformation $g_\infty$ is singular in the points of $(\det\Phi)^{-1}(0)$, near which it takes the form 
\begin{equation*}
g_\infty=\left(\begin{array}{cc}|z|^{-\tfrac{1}{4}}&0\\0&|z|^{\tfrac{1}{4}}\end{array}\right)
\end{equation*}
up to multiplication with a smooth unitary gauge transformation on $\Sigma$. 
\end{theorem}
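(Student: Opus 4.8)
The plan is to construct $g_\infty$ as an $h$-self-adjoint, positive complex gauge transformation, thereby reducing the existence of a limiting configuration to the solution of a single linear elliptic equation on the complement of the zero set of $q=\det\Phi$. Any $g\in\mc G^c=\Gamma(\Sigma^\times,\SL(E))$ admits a polar decomposition $g=u\cdot e^{\psi}$ with $u$ unitary and $\psi$ an $h$-self-adjoint trace-free endomorphism; since the unitary factor only moves $(A_h,\Phi)$ within its unitary gauge orbit, it suffices to find the self-adjoint factor $e^{\psi}$, and its uniqueness will immediately yield the asserted uniqueness up to unitary gauge.

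Next I would pass to the spectral picture. Away from $q^{-1}(0)$ the trace-free Higgs field $\Phi$ has two distinct eigenvalues $\pm\lambda$ with $\lambda^2=-q$, so on the spectral double cover $\wh\Sigma=\{\lambda\in K_{\Sigma}:\lambda^2=-q\}\to\Sigma$ the bundle $E$ splits holomorphically into eigenline bundles $L_+\oplus L_-$, exchanged by the covering involution. The condition $[\Phi_\infty\wedge\Phi_\infty^*]=0$ in \eqref{eq:decoupledsd} is purely algebraic: it holds exactly when the eigenlines are orthogonal, which fixes the off-diagonal part of $e^{\psi}$ by a fibrewise (hence smooth on $\Sigma^\times$) Gram--Schmidt normalisation. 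Once the metric is diagonal in the eigenframe, $\delb_{A_\infty}\Phi_\infty=0$ is automatic, the induced connection is diagonal, and the remaining equation $F_{A_\infty}^\perp=0$ reduces to the flatness of the Chern connection of the conformal factor $e^{2f}$ on $L_+$. This is a linear Poisson equation
\[
\Delta f = \rho,
\]
where $\rho$ is a curvature term, smooth on $\Sigma^\times$, determined by $h$ on $L_+$; the nonlinearity of the genuine Hitchin equation \eqref{eq:hitequ} disappears precisely because the two parts of \eqref{eq:decoupledsd} are imposed separately.

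I would then fix the behaviour at the punctures by comparison with the fiducial model. Near a simple zero of $q$ choose a coordinate $z$ and unitary frame with $\det\Phi=-z\,dz^2$; the limit $(A_\infty^{\fid},\Phi_\infty^{\fid})$ of the explicit family $(A_t^{\fid},\Phi_t^{\fid})$ introduced above (using $h_t\to0$) is then an exact local solution of \eqref{eq:decoupledsd}, and a direct computation identifies the complex gauge transformation relating it to $(A_h,\Phi)$ as $\mathrm{diag}(|z|^{-1/4},|z|^{1/4})$ up to a smooth unitary factor, reflecting the $z^{1/2}$-vanishing of $\lambda$ and the resulting indicial roots $\pm\tfrac14$. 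Prescribing this singular model at each of the $4(\gamma-1)$ zeros turns the global problem into solving $\Delta f=\rho$ on $\Sigma^\times$ within a weighted (polyhomogeneous) space adapted to this indicial weight; the equation is then uniquely solvable up to an additive constant, subject to a single balance condition discussed below, the constant corresponding exactly to the residual unitary scaling.

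The main obstacle I anticipate is the analysis at the branch points. One must verify that the indicial roots of the Laplacian on $\Sigma^\times$ are indeed $\pm\tfrac14$, so that the prescribed $|z|^{\pm1/4}$ asymptotics lie in the correct function space, and that the single solvability (cokernel) condition for $\Delta f=\rho$ is met; the latter is a degree/Gauss--Bonnet balance that holds precisely because the zeros of $q$ are simple, so that each branch point contributes the same fractional weight. A secondary point is to check that the diagonal metric constructed on the eigenline bundle over $\wh\Sigma$ is invariant under the covering involution and hence descends to a genuine configuration on $\Sigma^\times$ that matches the fiducial model smoothly. Granting these, the existence, the uniqueness up to unitary gauge, and the stated form of $g_\infty$ all follow.
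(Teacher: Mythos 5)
Your overall strategy is the intended one. The paper itself offers no argument beyond citing \cite{msww14} and remarking that the proof ``relies on the Fredholm theory of conic elliptic operators, in this case for the twisted Laplacian $\Delta_{A_\infty}$''; the proof there proceeds exactly along your lines: polar decomposition to reduce to a self-adjoint gauge factor (equivalently, a choice of singular hermitian metric), the observation that $[\Phi\wedge\Phi^*]=0$ is the purely algebraic condition that the eigenlines of $\Phi$ on the spectral cover be orthogonal, the reduction of $F^\perp_{A_\infty}=0$ to a linear Laplace equation for the logarithmic conformal factor on the eigenlines, and the identification of the local model $\operatorname{diag}(|z|^{-1/4},|z|^{1/4})$ by conjugating the holomorphic normal form of $\Phi$ at a simple zero of $q$ into the fiducial solution.

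The one step you have mis-analyzed is the solvability and uniqueness of $\Delta f=\rho$. You predict a one-dimensional kernel (an additive constant) together with a single cokernel/balance condition, and you propose to absorb the constant into ``residual unitary scaling''. Neither is correct, and the latter would in fact contradict the uniqueness clause of the theorem: shifting $f$ by a constant rescales the metric on the eigenlines by $\operatorname{diag}(e^{c},e^{-c})$, which is positive self-adjoint rather than unitary, so it would produce a genuinely different limiting configuration. What removes both the kernel and the cokernel --- and what the word ``twisted'' in $\Delta_{A_\infty}$ is pointing at --- is that $f$ is not a function but a section of the real line bundle $L_{\Phi_\infty}^{\R}$, which has monodromy $-1$ around each simple zero of $q$ because the eigenvalues $\pm\sqrt{-q}$ are exchanged there (this is the same covering involution you mention as a ``secondary point'', but it enters the analysis in an essential way). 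Hence $H^0(\Sigma^\times;L_{\Phi_\infty}^{\R})=0$, exactly as invoked in the proof of Lemma \ref{lem:indexlimtorus}; the conic Laplacian on sections of this twisted bundle has trivial kernel and, by self-adjointness on the natural weighted spaces, trivial cokernel, so $\Delta f=\rho$ is uniquely solvable with the prescribed $|z|^{\pm 1/4}$ asymptotics --- no Gauss--Bonnet balance condition and no constant ambiguity. With that correction your argument matches the cited proof.
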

 
This result was proved in \cite{msww14}. It relies on  the Fredholm theory of conic elliptic operators, in this case for the twisted Laplacian $\Delta_{A_{\infty}}$.

\subsubsection*{Desingularization by gluing}
We finally describe a partial converse, also shown in  \cite{msww14}, to Theorem \ref{ex.out.sol}. It entails a global version of the observation that  the family of smooth fiducial solutions $(A_t^{\fid}, \Phi_t^{\fid})$ desingularizes the limiting fiducial solution $(A_{\infty}^{\fid}, \Phi_{\infty}^{\fid})$. These solutions are obtained by gluing  $(A_\infty,\Phi_\infty)$, which on the complement of some neighbourhood of  $(\det\Phi)^{-1}(0)$ is a bounded solution to  Eq.~\eqref{eq:hitequresc} for any  $t$,  to the 
 model solution  $(A_t^{\fid},\Phi_t^{\fid})$ for some large but finite $t$.

\begin{theorem}\label{gluing_theorem}
Suppose that $(A_{\infty}, \Phi_{\infty})$ is a  limiting configuration where $\Phi_{\infty}$ is a simple Higgs field. Then there exists a family of smooth solutions $(A_t, \Phi_t)$ to the rescaled self-duality  equations \eqref{eq:hitequresc}  with $A_t \to A_{\infty}$ and $\Phi_t \to \Phi_{\infty}$  in $C^{\infty}_{\loc}$ at exponential rate in $t$   on the complement of $(\det \Phi_{\infty})^{-1}(0)$. 
\end{theorem}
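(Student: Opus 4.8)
The plan is to establish this as a gluing result, in the spirit of the desingularization described just above the statement: I would build an explicit approximate solution by cutting and pasting the fiducial model near the zeros onto the limiting configuration on the bulk, and then correct it to an exact solution by a Newton/contraction argument. Fix the holomorphic data $(\delb,\Phi)$ underlying the limiting configuration and recall from Theorem \ref{ex.out.sol} that $(A_\infty,\Phi_\infty)=g_\infty^\ast(A_h,\Phi)$ for a complex gauge transformation $g_\infty$ that is diagonal and singular near each point of $(\det\Phi)^{-1}(0)$. I would seek $(A_t,\Phi_t)$ inside the \emph{same} complex gauge orbit, i.e.\ as $(A_t,\Phi_t)=g_t^\ast(A_h,\Phi)$; this automatically preserves $\delb_{A_t}\Phi_t=0$, so only the moment-map equation (the first line of \eqref{eq:hitequresc}) remains to be solved. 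The first step is to assemble an \emph{approximate} gauge $g_t^{\appr}$: on a fixed coordinate disc about each zero I set $g_t^{\appr}=g_t^{\fid}$, the smooth fiducial gauge for which $(A_t^{\fid},\Phi_t^{\fid})$ solves \eqref{eq:hitequresc} exactly; on the complement of slightly larger discs I set $g_t^{\appr}=g_\infty$, for which $(A_\infty,\Phi_\infty)$ is an exact bounded solution; and on the intervening annuli I interpolate in the Hermitian logarithm $\log g$ by a cutoff.

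The estimate that makes this gluing work is that the two models are exponentially close on the gluing region. Since $h_t\to0$ uniformly on compact subsets of $(0,\infty)$ and, as recorded in \cite{msww14,msww15}, the fiducial family converges $(A_t^{\fid},\Phi_t^{\fid})\to(A_\infty^{\fid},\Phi_\infty^{\fid})$ at exponential rate in $t$ on the fixed annulus $r\sim\epsilon$ where the matching of Definition \ref{lim.conf.equ} takes place, the failure of the moment-map equation
\[
\mathcal E_t:=F_{A_t^{\appr}}^\perp+t^2\,[\Phi_t^{\appr}\wedge(\Phi_t^{\appr})^\ast]
\]
is supported in the gluing annuli and satisfies $\|\mathcal E_t\|\le C e^{-\delta t}$ in the relevant weighted norm, for some $\delta>0$.

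Next I would correct $g_t^{\appr}$ by a further complex gauge transformation $e^{\gamma}$ with $\gamma\in\Omega^0(\Sigma,\su(E))$ Hermitian; this keeps holomorphicity exact and reduces the whole problem to a single nonlinear elliptic equation of Kazdan--Warner type,
\[
L_t\,\gamma+Q_t(\gamma)=-\mathcal E_t,
\]
where $Q_t$ is the (quadratic and higher) remainder and the linearization $L_t$ is a variant of the twisted Laplacian $\Delta_{A_t^{\appr}}$ augmented by the $t^2$-term $[\Phi,[\Phi^\ast,\cdot\,]]$. The central analytic task is to invert $L_t$ with a bound $\|L_t^{-1}\|\le C t^{N}$ that is merely polynomial in $t$. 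I would obtain this in weighted Sobolev spaces adapted to the conic degeneration, exactly as in the Fredholm theory of \cite{msww14} used for Theorem \ref{ex.out.sol}: on the fiducial discs the $t^2$-term makes $L_t$ uniformly invertible once rescaled to the natural length scale $t^{-2/3}$ dictated by $\det\Phi$ having a simple zero, while on the bulk $\Sigma^{\times}$ the operator converges to the invertible limiting twisted Laplacian $\Delta_{A_\infty}$; a patching argument then glues the local right inverses into a uniform one.

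Finally, because $\|L_t^{-1}\|\le Ct^{N}$ while $\|\mathcal E_t\|\le C e^{-\delta t}$, the map $\gamma\mapsto -L_t^{-1}(\mathcal E_t+Q_t(\gamma))$ is a contraction on a small ball for $t$ large, producing a unique solution $\gamma_t$ with $\|\gamma_t\|\le C e^{-\delta' t}$. Setting $(A_t,\Phi_t)=(e^{\gamma_t}g_t^{\appr})^\ast(A_h,\Phi)$ gives a genuine smooth solution of \eqref{eq:hitequresc} after elliptic regularity upgrades $\gamma_t$, and the exponential decay of $\gamma_t$ together with the exponential convergence of $g_t^{\appr}$ to $g_\infty$ on compact subsets of $\Sigma^{\times}$ yields $C^\infty_{\loc}$ convergence of $(A_t,\Phi_t)$ to $(A_\infty,\Phi_\infty)$ at exponential rate. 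I expect the uniform invertibility of $L_t$ to be the main obstacle, since it is precisely there that the conic degeneration and the interaction between the two length scales $1$ and $t^{-2/3}$ must be controlled; everything else is bookkeeping around that estimate.
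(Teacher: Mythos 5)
Your proposal follows essentially the same route that the paper indicates for this theorem: the paper does not prove it here but attributes it to \cite{msww14} and summarizes the strategy as gluing the limiting configuration on the bulk to the fiducial solutions $(A_t^{\fid},\Phi_t^{\fid})$ near the zeroes of $\det\Phi_\infty$, which is exactly your construction of an approximate solution followed by a contraction-mapping correction within the complex gauge orbit. Your identification of the two key estimates --- exponentially small error versus polynomially bounded inverse of the linearized operator at the scale $t^{-2/3}$ --- matches the argument carried out in detail in \cite{msww14}.
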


\subsection{The limit under degeneration to a noded surface}\label{subsec:limitnodedsurface}

Following   \cite{sw15} we introduce  the setup   for our study of the Hitchin fibration on a family of Riemann surfaces degenerating to a  surface with one or more nodes.

\subsubsection*{Plumbing construction} 

We briefly recall the conformal plumbing construction for Riemann surfaces. A {\bf{Riemann surface with nodes}} is  a closed  one-dimensional complex manifold  with singularities $\Sigma_0$ where each point has a neighbourhood complex isomorphic to either a disk $\{|z|<\epsilon\}$ or to $U=\{zw=0\mid \left|z\right|,\left|w\right|<\epsilon\}$, in which case the point is called a node. A Riemann surface with  nodes arises from an unnoded surface by pinching of one or more  simply closed curves. Conversely, the effect of  the so-called conformal plumbing construction  is that it opens up a node by replacing the neighbourhood $U$ by $\{zw=t\mid t\in\C,\left|z\right|,\left|w\right|<\epsilon\}$. To describe this construction in more detail, let $(\Sigma_0,z,p)$ be a Riemann surface of genus $\gamma\geq2$ with conformal coordinate $z$ and a single node at $p$. Let $t\in\C\setminus\{0\}$ be fixed with $\left|t\right|$ sufficiently small. We then  define a smooth Riemann surface $\Sigma_t$ by removing the disjoint disks $D_t=\{\left|z\right|<\left|t\right|,\left|w\right|<\left|t\right|\}\subseteq U$ from $\Sigma_0$ and passing to the quotient space $\Sigma_t=(\Sigma_0\setminus D_t)/_{zw=t}$, which  is a Riemann surface of the same genus as $\Sigma_0$.   In the following we allow for Riemann surfaces with a finite number of nodes, the set of which we denote by $\mathfrak p=\{p_1,\ldots, p_k\}\subset \Sigma$. The value of $t$ may be different at different nodes. We let  $R:=\max_{p\in\mathfrak p}\left|t(p)\right|^2$ be  the maximum of the squares of these   absolute values. To deal with the case of multiple nodes in an efficient way we make the {\bf{convention}} that in the notation $\Sigma_R$ the dependence of the parameter  $t\in\C$ on the point  $p\in\mathfrak p$ is suppressed.\\
\medskip\\   
Let $\rho=\left|t\right|<1$ and consider the annuli
\begin{equation}\label{eq:annuli}
R_{\rho}^+=\{z\in\C\mid \rho\leq\left|z\right|\leq 1\}\qquad\textrm{and}\qquad  R_{\rho}^-=\{w\in\C\mid \rho\leq\left|w\right|\leq 1\}.
\end{equation}
The above identification of $R_{\rho}^+$ and $R_{\rho}^-$ along their inner boundary circles $\{\left|z\right|=\rho\}$ and $\{\left|w\right|=\rho\}$ yields a smooth cylinder $C_{t}$.\\
\medskip\\   
As before, we let $\operatorname{QD}(\Sigma)=H^0(\Sigma,K_{\Sigma}^2)$ denote  the $\C$-vector space of holomorphic quadratic differentials on  $\Sigma$. On a noded Riemann surface we will allow for quadratic differentials meromorphic  with poles of order at most $2$ at points in the subset $\mathfrak p\subset \Sigma$ of nodes.  In this case,  the corresponding $\C$-vector space of meromorphic quadratic differentials is denoted by $\operatorname{QD}_{-2}(\Sigma)$.


\begin{figure}   
\label{fig:1}                                                   %
\includegraphics[width=1.0\textwidth]{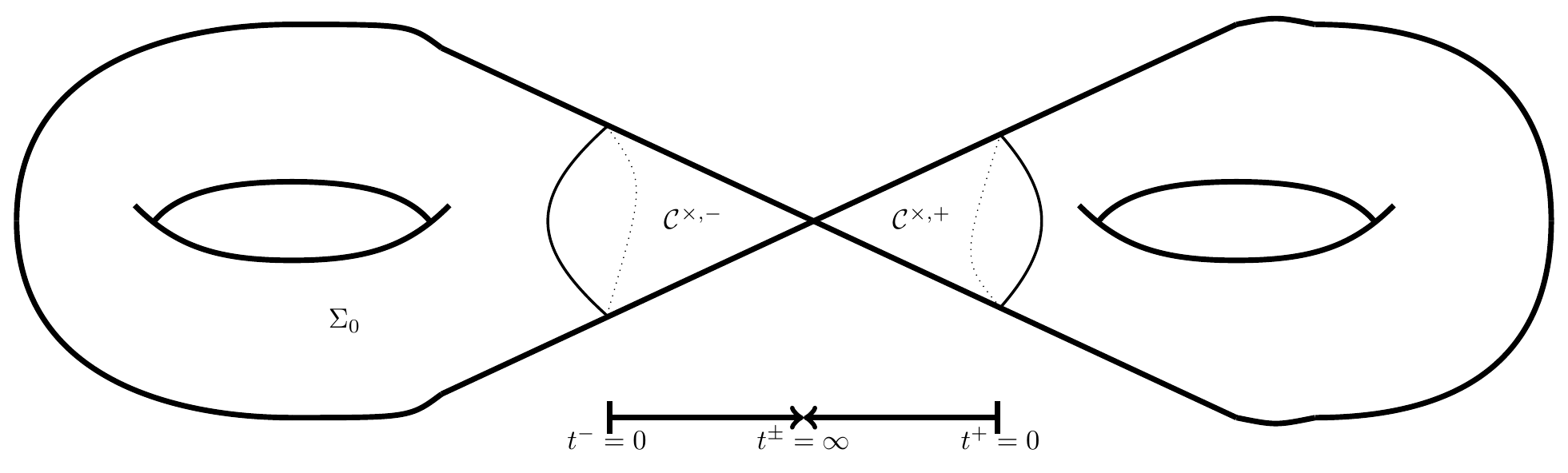}                         \caption{Degenerate Riemann surface $\Sigma_0$ with one node $p\in\mathfrak p$, which separates the two half-infinite cylinders $\mathcal C_0^{\pm}$.}
\end{figure}

%
%
\subsubsection*{The local model} 
 
We briefly describe how to extend the setup of the self-duality equations to the case where the underlying manifold is a noded Riemann surface $\Sigma_0$. The complex rank-$2$ vector bundle in this situation is  supposed to be a {\bf{cylindrical vector bundle}} as discussed e.g.~in \cite{clm96,ni02}. By this we mean that a pair of   cylindrical coordinates $(\tau^{\pm},\theta^{\pm})$ with
\begin{equation*}
\tau^{\pm}=\left|\log r^{\pm}\right|
\end{equation*}
is chosen, one for each of the two connected components $\mathcal C_0^{\pm}$  of the punctured neighbourhood $\mathcal C_0$ of $p\in\mathfrak p$. 
We then fix a smooth hermitian metric $h$ on $E$     in such a way that    its restriction to $E|_{\mathcal C_0^{\pm}}$ is invariant under pullback by translations in the $\tau^{\pm}$-direction. We furthermore require that $h$ is invariant under pullback via the isometric involution $(\tau^{\pm},\theta^{\pm})\mapsto(\tau^{\mp},\operatorname{arg}t-\theta^{\mp})$ interchanging the two half-infinite cylinders $\mathcal C^+$ and $\mathcal C^-$. The pair $(E,h)$ induces a hermitian vector bundle on each surface $\Sigma_R$ by restriction, which by the assumptions on $h$ extends smoothly over the cut-locus $\left|z\right|= \left|w\right|=\rho$, cf.~Figure \ref{fig:2}.\\ 
\medskip\\
We next fix constants $\alpha\in\R$ and $C\in\C$. Then the pair
\begin{equation}\label{eq:modsol}
A^{\mod}=\begin{pmatrix}\alpha&0\\0&-\alpha\end{pmatrix}\left(\frac{dz}{z}-\frac{d\bar z}{\bar z}\right),\quad \Phi^{\mod}=\begin{pmatrix}C&0\\0&-C\end{pmatrix}\frac{dz}{z}
\end{equation}
provides a  solution on $\C$ to the self-duality equations \eqref{hit.equ.uni}, which we call {\bf{model solution to parameters}} $(\alpha,C)$. It is smooth outside the origin and has a logarithmic (first-order) singularity in $z=0$, provided that $\alpha$ and $C$ do not both vanish. It furthermore restricts to a smooth solution on each of the annuli $R_{\rho}^{\pm}$ defined in \eqref{eq:annuli}. For constants $t\in\C$ and $\rho=\left| t\right|$ such that $0<\rho<1$ let $\mathcal C_t$ denote the complex cylinder obtained from gluing the two annuli $R_{\rho}^-$ and $R_{\rho}^+$. Since
\begin{equation*}
\frac{dz}{z}=-\frac{dw}{w}
\end{equation*} 
the two model solutions $(A_+^{\mod},\Phi_+^{\mod})$  to parameters $(\alpha,C)$ over $R_{\rho}^+$ and  $(A_-^{\mod},\Phi_-^{\mod})$ to parameters $(-\alpha,-C)$ over $R_{\rho}^-$ glue to a smooth solution $(A^{\mod},\Phi^{\mod})$ on $\mathcal C_t$, again called model solution to parameters $(\alpha,C)$. In the following it is always assumed that $\alpha>0$.\\
\medskip\\
We now impose the following   {\bf{assumptions}}.
\begin{itemize}
\item[(A1)]
For each $p\in\mathfrak p$, the constant $C_p=C_{p,+}$ is nonzero. 
\item[(A2)]
For each $p\in\mathfrak p$, the constants $C_{p,+}$ and $C_{p,-}$ satisfy $C_{p,+}=-C_{p,-}=C_p$.  
\item[(A3)]
The meromorphic quadratic differential $q=\det\Phi$ has only simple zeroes. 
\end{itemize}
The relevance of the model solutions is that any solution of the self-duality equations    with  logarithmic singularities in $\mathfrak p$ is exponentially close (with respect to the above cylindrical coordinates)  to some model solution, a fact which is due to  Biquard-Boalch \cite[Lemma 5.3]{bibo04}.

\begin{figure}   
\includegraphics[width=1.0\textwidth]{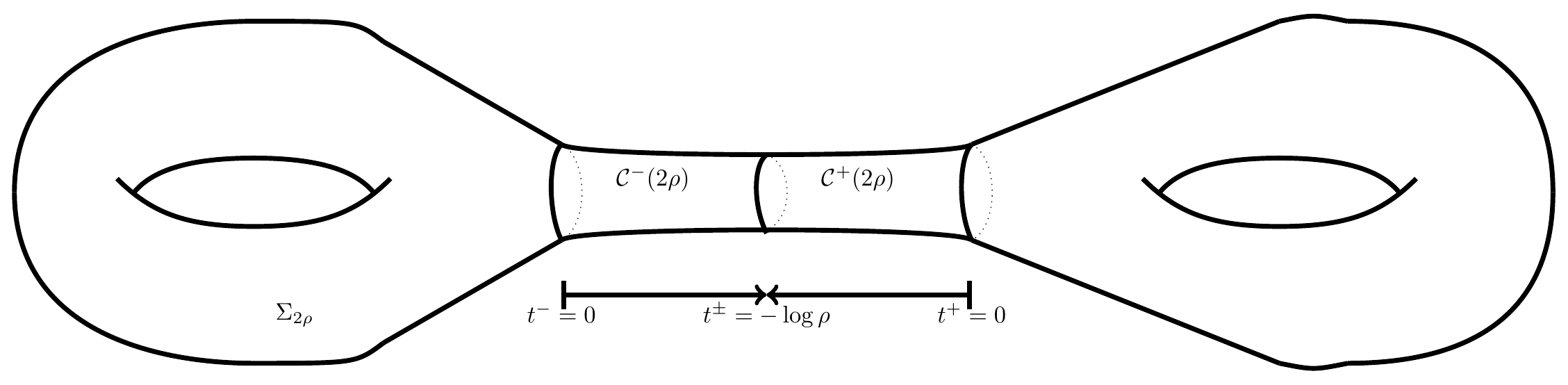}                         
\caption{Setup for the gluing theorem (Thm.~\ref{thm:mainthm}).}
\label{fig:2}       
\end{figure} 
The main result shown in \cite{sw15} is the following gluing theorem.

\begin{theorem}\label{thm:mainthm}
Let $(\Sigma,J_0)$ be a  Riemann surface with nodes in a finite set of points $\mathfrak p\subset\Sigma$. Let   $(A_0,\Phi_0)$ be a solution of the self-duality equations  with   logarithmic singularities in $\mathfrak p$, thus representing a point in $\mathcal M(\Sigma,J_0)$. Suppose that  $(A_0,\Phi_0)$ satisfies the assumptions (A1--A3) stated above.  Let  $(\Sigma,J_i)$ be a sequence of smooth  Riemann surfaces converging uniformly to $(\Sigma,J_0)$. Then, for every sufficiently large $i\in\N$, there exists  a smooth solution $(A_i,\Phi_i)$ of Eq.~\eqref{eq:hitequ} on $(\Sigma,J_i)$ such that $(A_i,\Phi_i)\to (A_0,\Phi_0)$ as $i\to\infty$ uniformly on compact subsets of $\Sigma\setminus\mathfrak p$.
\end{theorem}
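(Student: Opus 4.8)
The plan is to produce $(A_i,\Phi_i)$ as a small correction of an explicit glued approximate solution, the correction being supplied by a contraction mapping whose decisive ingredient is a uniform invertibility estimate for the gauge-fixed linearization across the degeneration $\rho=|t|\to 0$. First I would build an approximate solution $(A_i^{\appr},\Phi_i^{\appr})$ on $(\Sigma,J_i)$. On the complement of a fixed neighbourhood of $\mathfrak p$ the structures $J_i$ converge to $J_0$, so there one simply carries $(A_0,\Phi_0)$ over. On each plumbing cylinder $\mathcal C_t$ — glued from the annuli $R_\rho^+$ and $R_\rho^-$ of \eqref{eq:annuli} — I would interpolate by a fixed cut-off in the cylindrical variable $\tau^\pm=|\log r^\pm|$ between $(A_0,\Phi_0)$ on the two ends and the model solution $(A^{\mod},\Phi^{\mod})$ of \eqref{eq:modsol} to parameters $(\alpha,C_p)$. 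By the Biquard–Boalch estimate \cite[Lemma 5.3]{bibo04}, $(A_0,\Phi_0)$ is exponentially close in $\tau^\pm$ to such a model solution near $p$, so the interpolation is accurate and the resulting failure $\mathcal E_i$ of $(A_i^{\appr},\Phi_i^{\appr})$ to solve \eqref{eq:hitequ} is supported in a fixed transition region and exponentially small in the neck length $|\log\rho|$, hence bounded by a positive power of $R$ in a suitable weighted norm.

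Next I would set up the deformation problem. Writing a candidate as $(A_i^{\appr}+a,\,\Phi_i^{\appr}+\varphi)$ and imposing a Coulomb-type gauge-fixing condition relative to $(A_i^{\appr},\Phi_i^{\appr})$, the two equations of \eqref{eq:hitequ} together with the gauge condition assemble into $\mathcal L_i u+\mathcal Q_i(u)=-\mathcal E_i$, where $u=(a,\varphi)$, the operator $\mathcal L_i$ is the first-order elliptic gauge-fixed linearization built from $d_{A_i^{\appr}}$, its adjoint and the Higgs commutator terms, and $\mathcal Q_i$ is the quadratic remainder satisfying the usual Lipschitz estimate $\|\mathcal Q_i(u)-\mathcal Q_i(u')\|\le C(\|u\|+\|u'\|)\,\|u-u'\|$ with $C$ uniform in $i$.

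The heart of the argument, and the step I expect to be the main obstacle, is a uniform lower bound $\|\mathcal L_i u\|\ge c\,\|u\|$ with $c>0$ independent of $i$, in weighted Sobolev spaces adapted to the degenerating necks. On the body of $\Sigma$ the estimate follows from invertibility of the limiting $b$-operator $\mathcal L_0$ on $(\Sigma,J_0)$, which holds once the weight is chosen off the indicial roots and $(A_0,\Phi_0)$ is irreducible; irreducibility is guaranteed by simplicity of $\Phi_0$ (A3), since a simple Higgs field is stable. On each neck $\mathcal L_i$ is exponentially close to the operator obtained by linearizing at the model solution, which is translation-invariant in $\tau^\pm$; separating variables in the angular coordinate $\theta^\pm$ reduces it to a family of ordinary differential operators in $\tau$, and assumptions (A1)–(A2), i.e.\ $C_{p,+}=-C_{p,-}=C_p\neq 0$, force the Higgs terms to open a strictly positive mass gap, so that no mode is constant along the neck and no indicial root lies on the chosen weight line. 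This excludes eigenvalues of $\mathcal L_i$ collapsing to zero as $\rho\to 0$; a standard patching of the body and neck parametrices, absorbing the cross terms coming from the cut-offs, then upgrades these separate estimates to the uniform bound. The one point requiring care is the finite-dimensional space of moduli directions, which I would either exclude by the weight choice or carry along as a uniformly controlled obstruction whose vanishing follows from the freedom in the gluing data.

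Finally, with uniform invertibility and the quadratic estimate in hand, the contraction mapping principle applied to $u\mapsto-\mathcal L_i^{-1}\big(\mathcal E_i+\mathcal Q_i(u)\big)$ yields, for all large $i$, a unique small fixed point $u_i=(a_i,\varphi_i)$ with $\|u_i\|\le 2\|\mathcal L_i^{-1}\|\,\|\mathcal E_i\|\le C R^\delta$ for some $\delta>0$. Setting $(A_i,\Phi_i):=(A_i^{\appr}+a_i,\,\Phi_i^{\appr}+\varphi_i)$ gives a genuine solution of \eqref{eq:hitequ} on $(\Sigma,J_i)$, smooth by elliptic regularity. Since $A_i^{\appr}\to A_0$ and $\Phi_i^{\appr}\to\Phi_0$ on compact subsets of $\Sigma\setminus\mathfrak p$ by construction, while $\|u_i\|\to 0$, the desired convergence $(A_i,\Phi_i)\to(A_0,\Phi_0)$ on such compacta follows.
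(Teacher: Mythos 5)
First, a point of orientation: this paper does not actually prove Theorem~\ref{thm:mainthm}; it is quoted as the main result of \cite{sw15}, so your outline can only be measured against the strategy of that reference (and of the closely analogous gluing theorem, Theorem~\ref{gluing_theorem}, from \cite{msww14}). Your overall architecture --- an explicit approximate solution obtained by interpolating on each neck between $(A_0,\Phi_0)$ and the model solution \eqref{eq:modsol}, an error term exponentially small in the neck length by the Biquard--Boalch estimate, and a contraction mapping driven by a uniform bound on the inverse of the linearization --- is the right one and is essentially what \cite{sw15} does. The one structural difference is that there the correction is sought inside the complex gauge orbit of the approximate solution, so that the equation $\bar\partial_A\Phi=0$ is preserved automatically and the linearization becomes a self-adjoint second-order operator of the form $\Delta_A$ plus a nonnegative algebraic term built from $\Phi$, rather than your first-order system with a Coulomb gauge condition; both formulations are workable.

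The genuine gap is in your neck analysis. You assert that (A1)--(A2) ``force the Higgs terms to open a strictly positive mass gap, so that no mode is constant along the neck and no indicial root lies on the chosen weight line.'' This is true only for the components of the perturbation that fail to commute with $\Phi^{\mathrm{mod}}$; for those the commutators contribute a uniform mass of order $\left|C_p\right|>0$. On the commutant --- perturbations valued in the line bundle $L_{\Phi}$ of \eqref{eq:complexline}, which near a node consists of the diagonal matrices --- the Higgs terms vanish identically, the operator reduces to the flat $d+d^{*}$ (equivalently $\bar\partial$) on the long cylinder, and $0$ \emph{is} an indicial root: the bounded, non-decaying modes $u_{*}\,dz/z$ discussed in \S\ref{sec:fibrationdegsurface} and \S\ref{sec:conclremarks} of this paper are exactly such solutions, and they correspond to deformations of the model parameters $(\alpha_p,C_p)$ and to directions tangent to the fibres of the Hitchin fibration. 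These modes cannot be ``excluded by the weight choice'' (any translation-invariant weight either hits the root $0$ or trades kernel for cokernel on the other side of it); they must be dealt with head-on, either by adding the parameters $(\alpha_p,C_p)$ to the gluing data and exploiting the matching condition $u_{+}(0)=-u_{-}(0)$ that assumption (A2) encodes, or by a small-eigenvalue analysis of Cappell--Lee--Miller/Seeley--Singer type of the kind this paper invokes for $\bar\partial_A$ in \S\ref{sec:fibrationdegsurface}. As written, your uniform lower bound $\|\mathcal L_i u\|\ge c\,\|u\|$ fails on this finite-dimensional space of approximate zero modes, and with it the contraction argument; this is the actual heart of the proof rather than a routine afterthought.
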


\section{The Hitchin fibration in the limit of large Higgs fields}\label{sec:fibrationlargeHiggs}

Throughout this section it is supposed that the above assumption (A3) holds. Recall the Definition \ref{lim.conf.equ}  of limiting configurations. Our first observation is that   the second component $\Phi_\infty$ of a limiting configuration is completely determined up to  a unitary gauge transformation by the  holomorphic quadratic differential $q$. This is a consequence of the standard fact that any normal endomorphism is diagonalizable by  some $g\in \SU(n)$.
Now consider the space of unitary connections solving 
\begin{equation}
\bar\partial_A \Phi_\infty = 0, \qquad F_A^\perp = 0;
\label{deflcc}
\end{equation}
the gauge freedom is the  stabilizer of $\Phi_\infty$ in $\Gamma(\Sigma^\times,\SU(E))$, i.e.~the group of unitary gauge transformations of the complex line bundle
\begin{equation}\label{eq:complexline}
L_{\Phi_{\infty}}:=\{\gamma\in\End(E)\mid [\Phi_{\infty}\wedge\gamma]=0\}
\end{equation}
over $\Sigma^{\times}$. Fix a base solution $A_\infty$ of Eq.~\eqref{deflcc} and write 
$A = A_\infty + \alpha$, where $\alpha \in \Omega^1(\Sigma^{\times},\mf{su}(E))$. The first equation   gives that 
\begin{equation*}
[\alpha^{0,1} \wedge \Phi_\infty]=[ \alpha \wedge \Phi_\infty] = 0,
\end{equation*}
so $\alpha$ takes values in the real line bundle 
$L_{\Phi_\infty}^{\R}:=L_{\Phi_\infty} \cap \su(E)$. This implies in particular that $[\alpha \wedge \alpha]=0$.  From the second equation 
of \eqref{deflcc} we obtain $d_{A_\infty} \alpha = 0$, hence the ungauged deformation space at $(A_\infty, \Phi_\infty)$ can be identified with
$$
Z^1(\Sigma^\times;L_{\Phi_\infty}^{\R}):=\{\alpha\in\Omega^1(\Sigma^\times,L_{\Phi_\infty}^{\R})\mid d_{A_\infty}\alpha=0\}.
$$
Next consider the subgroup $\operatorname{Stab}_{\Phi_\infty}$ of unitary gauge transformations which fix $\Phi_\infty$. 
If $g \in \operatorname{Stab}_{\Phi_\infty}$ is of the form $g = \exp(\gamma)$ for some  $\gamma\in\Omega^0(\Sigma^\times,L_{\Phi_\infty}^{\R})$, 
then $g$ acts on $\alpha\in\Omega^1(\Sigma^\times,L_{\Phi_\infty}^{\R})$ by
$$
\alpha^g=g^{-1} \alpha  g+g^{-1}(d_{A_\infty}g)=\alpha+d_{A_\infty}\gamma.
$$
We here use that $L_{\Phi_\infty}^{\R}$ is a parallel line subbundle of $\mf{su}(E)$ with respect to $A_\infty$, so $g^{-1} \alpha  g=\alpha$ and 
$d_{A_\infty}\exp(\gamma)=\exp(\gamma)d_{A_\infty}\gamma$. Hence the gauged deformation space is 
\begin{equation*}
H^1(\Sigma^\times;L_{\Phi_\infty}^{\R})=\frac{Z^1(\Sigma^\times;L_{\Phi_\infty}^{\R})}{B^1(\Sigma^\times;L_{\Phi_\infty}^{\R})},
\end{equation*}
where 
\begin{equation*}
B^1(\Sigma^\times;L_{\Phi_\infty}^{\R}):=\{d_{A_\infty}\gamma\mid\gamma\in\Omega^0(\Sigma^\times, L_{\Phi_\infty}^{\R})\}.
\end{equation*}

\begin{lemma}[cf.~\cite{msww14}]\label{lem:indexlimtorus}
Suppose the assumption (A3) holds. Then
\[
\dim_\R H^1(\Sigma^\times; L_{\Phi_\infty}^{\R}) = 6(\gamma-1),
\]
where $\gamma$ is the genus of $\Sigma$.
\end{lemma}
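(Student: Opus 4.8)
The plan is to compute $\dim_\R H^1(\Sigma^\times; L_{\Phi_\infty}^{\R})$ as the dimension of the space of flat sections of a real line bundle over the punctured surface, by relating it to a topological/index quantity and then accounting for the boundary behaviour at the punctures $q^{-1}(0)$. The key structural facts established just before the statement are that $L_{\Phi_\infty}^{\R}$ is a \emph{parallel} real line subbundle of $\mf{su}(E)$ over $\Sigma^\times$ with respect to $A_\infty$, and that the relevant cohomology is the ordinary de Rham cohomology of $\Sigma^\times$ with coefficients in this flat line bundle. First I would identify $H^1(\Sigma^\times; L_{\Phi_\infty}^{\R})$ with the degree-one cohomology of the local system determined by the holonomy of the flat connection $d_{A_\infty}$ on $L_{\Phi_\infty}^{\R}$.

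\textbf{Reducing to an Euler characteristic count.}
Since $L_{\Phi_\infty}^{\R}$ is a flat real line bundle over the open surface $\Sigma^\times=\Sigma\setminus q^{-1}(0)$, its cohomology is governed by the Euler characteristic of $\Sigma^\times$ together with the monodromy around each puncture. I would compute $H^0$ and $H^2$ first: because the line bundle $L_{\Phi_\infty}$ is the eigenline bundle defined by the \emph{normal} endomorphism $\Phi_\infty$, its holonomy around each simple zero of $q=\det\Phi_\infty$ is nontrivial (the two eigenvalues $\pm\sqrt{q}$ are interchanged, so $\sqrt{q}$ picks up a sign), which forces $H^0(\Sigma^\times; L_{\Phi_\infty}^{\R})=0$ and likewise kills $H^2$ by Poincar\'e duality on the open surface. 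Then the Euler characteristic formula
\[
\dim H^0 - \dim H^1 + \dim H^2 = \chi(\Sigma^\times)
\]
reduces the problem to computing $\chi(\Sigma^\times)$. Under (A3) the Higgs field is simple, so $q$ has exactly $4(\gamma-1)$ simple zeroes, whence $\chi(\Sigma^\times)=\chi(\Sigma)-4(\gamma-1)=(2-2\gamma)-4(\gamma-1)=-6(\gamma-1)$. With the vanishing of $H^0$ and $H^2$ this yields $\dim_\R H^1(\Sigma^\times; L_{\Phi_\infty}^{\R})=6(\gamma-1)$.

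\textbf{The main obstacle.}
The hard part will be justifying the two vanishing statements, and in particular ensuring that the cohomology computed by the ungauged/gauged quotient above really agrees with the ordinary cohomology of the flat local system with the correct $L^2$ or polyhomogeneous boundary conditions near $q^{-1}(0)$. The subtlety is that $\alpha$ and $\gamma$ a priori live in smooth forms on $\Sigma^\times$ with no constraint at the punctures, whereas the relevant moduli interpretation (via Theorem \ref{ex.out.sol}, where the limiting gauge transformation $g_\infty$ is singular of the specific form $\mathrm{diag}(|z|^{-1/4},|z|^{1/4})$) dictates a definite growth/decay condition. I would therefore invoke the conic Fredholm theory for the twisted Laplacian $\Delta_{A_\infty}$ referenced after Theorem \ref{ex.out.sol}: the half-integer indicial roots coming from the local model guarantee that there are no $L^2$ harmonic $0$-forms, pinning down $H^0=0$, and Poincar\'e--Hodge duality in this conic setting transfers the same vanishing to $H^2$. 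Once the functional-analytic framework is fixed so that harmonic representatives are the $L^2$ solutions, the index is a deformation invariant and the Euler-characteristic count above gives the stated dimension, matching the real dimension $6(\gamma-1)$ of the Hitchin base (twice $\dim_\C \operatorname{QD}(\Sigma)=3(\gamma-1)$) as expected for the torus fibre.
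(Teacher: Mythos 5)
Your proposal is correct and follows essentially the same route as the paper: vanishing of $H^0$ from the nontrivial monodromy of $L_{\Phi_\infty}^{\R}$ around each simple zero of $q$, vanishing of $H^2$ by Poincar\'e duality, and then the Euler characteristic count $\chi(\Sigma^\times)=2-2\gamma-4(\gamma-1)=-6(\gamma-1)$. The additional analytic caveats you raise about $L^2$ boundary conditions are reasonable but not needed for the paper's argument, which treats the statement purely as a computation for the flat local system on the open surface.
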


\begin{proof}
Working either with $\Sigma^\times$ or the homotopy equivalent space $M = \Sigma \setminus B_\varepsilon(\mathfrak p)$ for some sufficiently small $\varepsilon>0$  (so $\del M$ is a union of $k$ circles, 
$k = |\mathfrak p|$), we note the following. First, there are no nontrivial parallel sections since $L_{\Phi_\infty}^{\R}$ is twisted near each 
$p_i$, so $H^0(\Sigma^\times;L_{\Phi_\infty}^{\R})=0$; by Poincar\'e duality, $H^2( \Sigma^\times; L_{\Phi_\infty}^{\R}) = H^0(M,\partial M;L_{\Phi_\infty}^{\R})=0$ 
as well. Recall also that since $L_{\Phi_\infty}^{\R}$ is a flat real line bundle, i.e., a local system of rank $1$, it has Euler-Poincar\'e characteristic 
\[
\chi(\Sigma^\times;L_{\Phi_\infty}^{\R}) = \chi(\Sigma^\times) = 2-2\gamma-k.
\]
These facts together give that
\[
\dim_\R H^1(\Sigma^\times; L_{\Phi_\infty}^{\R}) =k + 2\gamma-2 = 4\gamma-4 + 2\gamma-2 = 6\gamma-6,
\]
as claimed.
\end{proof}

We now turn to a discussion of the asymptotic behaviour of the natural $L^2$ Riemannian metric $G$ on $\mathcal M$ in the limit of large Higgs fields. By definition
\begin{equation}\label{eq:defl2metric}
G_x((\alpha_1,\varphi_1),(\alpha_2,\varphi_2))=2\Re\int_{\Sigma}\Tr(\alpha_1^{\ast}\wedge \alpha_2+\varphi_1\wedge \varphi_2^{\ast})
\end{equation}
for $x=[(A,\Phi)]\in\mathcal M$ and a pair $(\alpha_j,\varphi_j)\in \Omega^{0,1}(\Sigma,\mathfrak{sl}(E))\oplus \Omega^{1,0}(\Sigma,\mathfrak{sl}(E))$ in unitary gauge, representing a pair of tangent vectors of $\mathcal M$ in $x$. The metric $G$ has been  introduced by Hitchin in \cite{hi87}; remarkably, it  is a complete (in the case where  $d=\deg E$ is odd) hyperk\"ahler metric.   The definition \eqref{eq:defl2metric}  of $G$  cannot be extended to points   $x=(A_{\infty},\Phi_{\infty})$, the limiting configurations of   \textsection \ref{subsec:limitlargeHiggs}. Namely, it  turns out that infinitesimal variations of limiting configurations in directions transversally to the Hitchin fibration  fail to have finite $L^2$ norm. In contrast, the restriction of the metric $G$ to  its fibres extends to the limit, in a way which we describe next.\\
\medskip\\
For a simple holomorphic quadratic differential $q\in \operatorname{QD}(\Sigma)$ we let
\begin{equation*}
\T_q:=\{[(A_{\infty},\Phi_{\infty})]\mid \eqref{eq:decoupledsd}\;\textrm{and}\; \det\Phi_{\infty}=q\}
\end{equation*}
denote  the associated torus of  limiting configurations, which by Lemma \ref{lem:indexlimtorus} has  real dimension $6(\gamma-1)$. We then set  (supressing the dependence from the base point $x=[(A_{\infty},\Phi_{\infty})]$ from the notation)
\begin{equation}\label{eq:l2torusmetric}
G_q(\alpha_1,\alpha_2)=2\Re\int_{\Sigma}\Tr(\alpha_1^{\ast}\wedge\alpha_2)
\end{equation}
for
\begin{equation*}
\alpha_1,\alpha_2\in T_x\T_q=\{\alpha\in\Omega^1(\Sigma^{\times},L_{\Phi_{\infty}}^{\R})\mid \alpha\in\ker(d_{A_{\infty}}\oplus  d_{A_{\infty}}^{\ast})\}.
\end{equation*}
Assuming finiteness of the integral on the right-hand side of   \eqref{eq:l2torusmetric} for a moment,  Lemma \ref{lem:indexlimtorus} can now  easily be used to deduce flatness of the  Riemannian metric $G_q$. Namely, we observe  that for any $x\in\T_q$ a tangent frame of $T_x\T_q$ induces a flat   local coordinate system on some neighbourhood of $x$. Indeed, let $(\beta,0)\in T_{x}\T_q$ and put $B_{\infty}:= A_{\infty}+\beta$. Then it follows that the unitary connection $B_{\infty}$ satisfies
\begin{equation*}
F_{B_{\infty}}=F_{A_{\infty}}+d_{A_{\infty}}\beta+\frac{1}{2}[\beta\wedge\beta]=F_{A_{\infty}}=0,
\end{equation*}
since $[\beta\wedge\beta]=0$ for any $1$-form with values in $L_{\Phi_{\infty}}$ and $d_{A_{\infty}}\beta=0$ by assumption. Therefore $(B_{\infty},\Phi_{\infty})$ represents again a point in $\T_q$.   It follows that the Riemannian metric $G_q$ has constant coefficients with respect to the above local coordinate system. Therefore its curvature vanishes.\\
\medskip\\
The outlined result  has been obtained in  collaboration with R.~Mazzeo, H.~Wei{\ss} and F.~Witt as part of a much broader study of the asymptotic geometry of the moduli space $\mathcal M_d$, cf.~the forthcoming article \cite{msww16}. There it is shown that  the restriction of the  metric $G$   to the fibre over $tq$ of the Hitchin fibration converges to $G_q$   as $t\to\infty$. This also explains finiteness of the integral in \eqref{eq:l2torusmetric}.   Concerning the structure of the $L^2$ metric on $\mathcal M_d$ itself it is proven that it is asymptotically close to the well-studied semiflat hyperk\"ahler metric $G_{\operatorname{sf}}$, an incomplete Riemannian metric which is defined only on the region $\mathcal M^{\ast}$ comprised by the simpe Higgs fields. This metric stems     from the data of an algebraic completely integrable system determined by restricting the Hitchin fibration to  $\mathcal M^{\ast}$. The term `semiflat'  here refers to the fact that the fibres $\det^{-1}(q)$ are exactly flat with respect to $G_{\operatorname{sf}}$. Moreover, this metric induces on the base $\operatorname{QD}(\Sigma)$ a K\"ahler metric with further interesting properties, a so-called special K\"ahler metric as studied e.g.~by Freed \cite{fr99}.    Our  investigation here is guided by the  conjectural picture due to  Gaiotto, Moore and Neitzke \cite{gmn10,gmn13} which describes the $L^2$ metric $G$ as a perturbation series off the semi-flat metric $G_{\operatorname{sf}}$.

\section{The Hitchin fibration under degenerations of the Riemann surface}\label{sec:fibrationdegsurface}

The aim of this section is to carry part of   the preceding discussion over to the case of a family of Riemann surfaces degenerating to a noded limit.
To start with, we recall that the linearization of the self-duality equations at a solution $(A,\Phi)$ gives rise to the operator
\begin{equation}\label{eq:linoperator}
D_{(A,\Phi)}\colon (\alpha,\varphi)\mapsto\begin{pmatrix}d_A\alpha+[\Phi\wedge\varphi^{\ast}]+[\Phi^{\ast}\wedge\varphi]\\
\bar\partial_A\varphi+[\alpha^{0,1}\wedge\Phi]\end{pmatrix}.
\end{equation}
Elements in the nullspace of $D_{(A,\Phi)}$ represent tangent vectors of $\mathcal M$ at $x=[(A,\Phi)]$ up to unitary gauge. We are here only interested in the directions tangential at $x$ to the fibres of the Hitchin fibration, the space of which we denote by $\mathcal V_x$. It follows from \eqref{eq:linoperator} that the space
\begin{equation*}
\mathcal W_x:=\{\alpha\in\Omega^1(\Sigma,\mathfrak{su}(E))\mid d_A\alpha=0,  d_A^{\ast}\alpha=0, [\alpha^{0,1}\wedge\Phi]=0\},
\end{equation*}
the equation $d_A^{\ast}\alpha=0$ constituting a gauge-fixing condition, represents    a subspace of $\mathcal V_x$.  By comparing dimensions, we show below that over a smooth surface $\Sigma$ both spaces do in fact coincide. Because of the decompositions
\begin{equation*}
d_A=\partial_A+\bar\partial_A\qquad\textrm{and}\qquad d_A^{\ast}=-\ast \partial_A\ast-\ast\bar\partial_A\ast
\end{equation*}
we see that the nullspaces of the operators $d_A+d_A^{\ast}$ and $\bar\partial_A\colon\Omega^{1,0}(\Sigma,\mathfrak{sl}(E))\to \Omega^{1,1}(\Sigma,\mathfrak{sl}(E))$ are in bijection to each other. It therefore suffices to consider instead of $d_A+d_A^{\ast}$ the simpler operator $\bar\partial_A$. Indeed, writing the connection $A$ with respect to a local unitary frame over $U\subset\Sigma$  as $A=\beta\, d\bar z-\beta^{\ast}\, dz$ for some $\beta\in C^{\infty}(U,\mathfrak{sl}(2,\C))$ and using  $(\alpha^{1,0})^{\ast}=-\alpha^{0,1}$ it follows that
\begin{equation*}
d_A\alpha=\bar\partial \alpha^{1,0}+[\beta\, d\bar z\wedge \alpha^{1,0}] +\partial\alpha^{0,1}-[\beta^{\ast}\,dz\wedge \alpha^{0,1}]=2\Re\bar\partial_A\alpha^{1,0},
\end{equation*}
and similarly
\begin{eqnarray*}
-\ast d_A^{\ast}\alpha&=& \bar\partial \ast \alpha^{1,0}+[\beta\, d\bar z\wedge \ast\alpha^{1,0}]+\partial\ast\alpha^{0,1}-[\beta^{\ast}\,dz\wedge \ast\alpha^{0,1}]\\
&=& \bar\partial (-i \alpha^{1,0})+[\beta\, d\bar z\wedge (-i\alpha^{1,0})]+\partial (-i\alpha^{0,1})-[\beta^{\ast}\,dz\wedge  (-i\alpha^{0,1})]\\
&=&2\Im \bar\partial_A\alpha^{1,0},
\end{eqnarray*}
and therefore $\bar\partial_A\alpha^{1,0}$ determines $(d_A+d_A^{\ast})\alpha$ and vice versa.\\
\medskip\\
Although it is at present not clear whether the equality $\mathcal W_x=\mathcal V_x$ continuous to hold   for the noded limit $\Sigma_0$ of the smooth family $\Sigma_R$ of Riemann surfaces (we here use the notation introduced in \textsection \ref{subsec:limitnodedsurface}), it    motivates our study of the behaviour of the family of operators  $\bar\partial_A$   in the limit $R\searrow0$. For a Higgs field $\Phi$ we set as before $\Sigma_R^{\times}=\Sigma_R\setminus(\det\Phi)^{-1}(0)$. We also recall   Definition \ref{lim.conf.equ} of a limiting Higgs field $\Phi_{\infty}$ on $\Sigma_R^{\times}$, i.e.~a Higgs field which satisfies $[\Phi_{\infty}\wedge\Phi_{\infty}^{\ast}]=0$. We keep the above assumption (A3) that all the Higgs fields we consider have simple determinants.

\begin{proposition}\label{prop:isomorlinebundles}
Let   $\Phi$ be a smooth  Higgs field on $\Sigma_R$ such that $\det\Phi=q$.  Then there exists  a limiting Higgs field $\Phi_{\infty}$     of the same determinant $q$, and moreover,  for any such  $\Phi_{\infty}$ the line bundles $L_{\Phi}$ and $L_{\Phi_{\infty}}$   (cf.~\eqref{eq:complexline}) are isomorphic as complex vector bundles over $\Sigma_R^{\times}$. 
\end{proposition}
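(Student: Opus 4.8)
The statement has two parts: existence of a limiting Higgs field $\Phi_\infty$ with $\det\Phi_\infty=q$, and an isomorphism $L_\Phi\cong L_{\Phi_\infty}$ of complex line bundles over $\Sigma_R^\times$. For existence, I would proceed exactly as in the large-Higgs-field picture: the condition $\det\Phi_\infty=q$ together with $[\Phi_\infty\wedge\Phi_\infty^*]=0$ says that $\Phi_\infty$ is a normal (i.e.\ commuting-with-its-adjoint) trace-free endomorphism-valued $(1,0)$-form with prescribed characteristic polynomial. Away from the zeros of $q$ the two eigenvalues $\pm\sqrt{-q}$ are distinct and nonvanishing, so over $\Sigma_R^\times$ one can diagonalize $\Phi$ pointwise by a unitary frame. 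Concretely, I would take the given smooth $\Phi$, which already has $\det\Phi=q$, and apply the fibrewise spectral theorem for normal operators: there is a smooth unitary gauge transformation $u$ on $\Sigma_R^\times$ (by simplicity of the eigenvalues and the argument recalled at the start of \textsection\ref{sec:fibrationlargeHiggs}, any normal endomorphism is diagonalizable by some $g\in\SU(2)$, and this can be done smoothly in the parameter where eigenvalues stay distinct) such that $\Phi_\infty:=u^{-1}\Phi u$ is diagonal, hence automatically satisfies $[\Phi_\infty\wedge\Phi_\infty^*]=0$ while keeping $\det\Phi_\infty=\det\Phi=q$. This produces the desired limiting Higgs field.

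\textbf{The isomorphism.} For the second assertion, recall that by definition $L_\Psi=\{\gamma\in\End(E)\mid[\Psi\wedge\gamma]=0\}$ is the bundle of endomorphisms commuting with $\Psi$. Over $\Sigma_R^\times$, where $\Psi$ has two distinct eigenvalues, the centralizer of $\Psi$ in $\End_0(E)$ is exactly the span of the diagonal idempotent difference, a complex line. The key observation is that the very same $u$ constructed above conjugates the centralizer of $\Phi$ to that of $\Phi_\infty$: if $[\Phi\wedge\gamma]=0$ then $[\Phi_\infty\wedge(u^{-1}\gamma u)]=[u^{-1}\Phi u\wedge u^{-1}\gamma u]=u^{-1}[\Phi\wedge\gamma]u=0$, and conversely. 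Thus $\gamma\mapsto u^{-1}\gamma u$ is a fibrewise $\C$-linear isomorphism $L_\Phi\to L_{\Phi_\infty}$, and since $u$ is a smooth gauge transformation on $\Sigma_R^\times$ this is a smooth bundle isomorphism. This gives the isomorphism of complex line bundles directly, and in fact shows the two bundles are gauge-equivalent, not merely abstractly isomorphic.

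\textbf{Main obstacle.} The routine linear algebra is clean; the genuine point to be careful about is the \emph{global smoothness and single-valuedness} of the diagonalizing gauge $u$ over $\Sigma_R^\times$, and the behaviour of $u$ as one approaches a zero of $q$ (where eigenvalues collide) and as one approaches the nodes (where, by the cylindrical setup of \textsection\ref{subsec:limitnodedsurface} and assumptions (A1)--(A2), the model solution has a nontrivial twist so that $L_{\Phi_\infty}^\R$ is the twisted real line bundle appearing in Lemma~\ref{lem:indexlimtorus}). The eigenvalue collision at the simple zeros of $q$ does not affect the statement, since we work over $\Sigma_R^\times$ which excises exactly $(\det\Phi)^{-1}(0)$; but one must verify that $u$ is well-defined up to the residual stabilizer (the diagonal torus), so that $L_\Phi$ and $L_{\Phi_\infty}$ are canonically identified independent of the ambiguity in $u$. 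Since the stabilizer acts trivially by conjugation on the centralizer line, this ambiguity is harmless, and the monodromy of $L_\Phi$ around each puncture matches that of $L_{\Phi_\infty}$ precisely because conjugation by $u$ intertwines the two local systems. This matching of monodromy (equivalently, of the twisting class near each $p\in\mathfrak p$) is what I would check most carefully, as it is the only place where the global topology, rather than pointwise linear algebra, enters.
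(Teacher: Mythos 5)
Your mechanism for the second half of the statement --- conjugation intertwines centralizers, so any gauge transformation carrying $\Phi_\infty$ to $\Phi$ induces a fibrewise $\C$-linear isomorphism $L_{\Phi_\infty}\to L_\Phi$ --- is exactly right, and it is also what drives the paper's proof. But the way you produce the intertwiner contains a genuine gap. You invoke the fibrewise spectral theorem for normal operators to diagonalize the \emph{given} $\Phi$ by a unitary $u$. The given $\Phi$ is an arbitrary smooth Higgs field; it is $\Phi_\infty$, not $\Phi$, that satisfies $[\Phi_\infty\wedge\Phi_\infty^{\ast}]=0$, i.e.\ is normal. A non-normal $2\times2$ matrix with distinct eigenvalues is conjugate to a diagonal one only by an element of $\SL(2,\C)$, not of $\SU(2)$, so there is in general no unitary $u$ with $u^{-1}\Phi u$ diagonal, and the spectral-theorem step does not apply. (Allowing $u\in\SL(2,\C)$ would still make $u^{-1}\Phi u$ normal, since diagonal matrices are normal, but then the next problem remains.)

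The more serious obstruction is global: the eigenvalues of $\Phi$ are $\pm\sqrt{-q}$, and the monodromy around each excised simple zero of $q$ exchanges them, so the eigenline bundles of $\Phi$ are defined only on the double cover determined by $\sqrt{q}$; no globally diagonal form of $\Phi$ (or of $\Phi_\infty$) exists over $\Sigma_R^{\times}$. This is precisely why the limiting configurations of Definition~\ref{lim.conf.equ} are required to take the non-diagonal fiducial form near each point of $q^{-1}(0)$. The paper sidesteps both issues: existence of $\Phi_\infty$ with $\det\Phi_\infty=q$ is quoted from the limiting-configuration theory of \textsection\ref{sec:fibrationlargeHiggs} (Theorem~\ref{ex.out.sol}, \cite{msww14}), and \cite[Lemma 4.2]{msww14} supplies a smooth $g\in\Gamma(\Sigma_R^{\times},\SL(E))$ with $\Phi=g^{-1}\Phi_\infty g$; conjugation by this complex (not unitary) gauge transformation gives the bundle isomorphism, and the paper additionally notes that $g$ is homotopic to the identity because $\SL(2,\C)$ is homotopy equivalent to the simply connected $S^3$ while $\Sigma_R^{\times}$ retracts onto a bouquet of circles. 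Your closing paragraph correctly flags single-valuedness of the diagonalizing gauge as the danger point, but the danger is real and the argument as written does not get past it.
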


\begin{proof}
Since by assumption  $q=\det \Phi$ has only simple zeroes, there exists a limiting Higgs field $\Phi_{\infty}$ as introduced in \textsection \ref{sec:fibrationlargeHiggs} of the same determinant $q$. Furthermore, as shown in \cite[Lemma 4.2]{msww14} one can choose a smooth section $g\in \Gamma(\Sigma_R^{\times},\SL(E))$ such that  $\Phi=g^{-1}\Phi_{\infty} g$. Since the  Lie group $\SL(2,\C)$ is  homotopy-equivalent to the simply-connected manifold $S^3$ and $\Sigma_R^{\times}$ retracts onto a bouquet of circles there are no obstructions to  the existence of a smooth path $g_t\colon t\mapsto  g_t\in \Gamma(\Sigma_R^{\times},\SL(E))$ satisfying $g_0=\Id$ and $g_1=g$. Therefore the complex  line bundles $L_{\Phi}$ and $L_{\Phi_{\infty}}$ are isomorphic. 
\end{proof}

\begin{proposition}\label{prop:linebundlepar}
Let $(A,\Phi)$ be a smooth solution of the self-duality equations \eqref{eq:hitequ} on $\Sigma_R$, $R>0$. Then the line bundle $L_{\Phi}$ over $\Sigma_R^{\times}$ is parallel with respect to the connection  $A$. 
\end{proposition}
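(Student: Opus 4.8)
---

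The plan is to show that the complex line bundle $L_{\Phi}=\{\gamma\in\End(E)\mid[\Phi\wedge\gamma]=0\}$ is preserved by the covariant derivative $d_A$, i.e.\ that $d_A$ restricts to a connection on $L_\Phi\subset\End(E)$, so that a local section of $L_\Phi$ remains in $L_\Phi$ after covariant differentiation. Concretely, I would fix a local section $\gamma\in\Gamma(U,L_\Phi)$ and aim to prove that $d_A\gamma$ again takes values in $L_\Phi$, which amounts to the identity $[\Phi\wedge d_A\gamma]=0$ whenever $[\Phi\wedge\gamma]=0$. The natural tool is the Leibniz rule for the induced connection on $\End(E)$ together with the defining equation $\bar\partial_A\Phi=0$ from \eqref{eq:hitequ}.

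First I would differentiate the defining relation $[\Phi\wedge\gamma]=0$. Applying $d_A$ and using that $A$ induces a connection on $\End(E)$ for which the bracket is parallel, the Leibniz rule gives $[d_A\Phi\wedge\gamma]\pm[\Phi\wedge d_A\gamma]=0$. Here I must be careful with the holomorphic/antiholomorphic splitting: since $\Phi\in\Omega^{1,0}$, the second Hitchin equation says $\bar\partial_A\Phi=0$, so the $(2,0)$ and $(1,1)$ type components of $d_A\Phi$ reduce to $\partial_A\Phi$ and its relation to the curvature. The key point will be to exploit that $L_\Phi$ is precisely the commutant of $\Phi$ and that, because $\det\Phi=q$ has only simple zeroes (assumption (A3)), $\Phi$ is regular (its centralizer in $\mathfrak{sl}(E)$ is exactly the rank-one bundle $L_\Phi$) away from the zero set, so that $L_\Phi$ is a genuine line bundle over $\Sigma_R^\times$ and the centralizer condition is rigid enough to force $d_A\gamma$ back into $L_\Phi$.

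The cleanest route is likely pointwise and algebraic rather than through curvature identities: over $\Sigma_R^\times$ the Higgs field $\Phi$ is a regular (indeed, by simplicity, a regular semisimple) element of $\mathfrak{sl}(E)\otimes K_{\Sigma}$, and $L_{\Phi_\infty}^{\R}$ was already observed in \textsection\ref{sec:fibrationlargeHiggs} to be parallel in the decoupled case. For the coupled equation I would argue that the Chern connection compatible with the holomorphic structure $\bar\partial_A$ and the metric $h$ preserves the eigenline decomposition determined by $\Phi$: since $\bar\partial_A\Phi=0$ the eigenlines of $\Phi$ are holomorphic subbundles, and the first equation $F_A^\perp+[\Phi\wedge\Phi^*]=0$ together with $[\Phi_\infty\wedge\Phi_\infty^*]=0$ in the relevant gauge forces the orthogonal splitting to be $A$-parallel; hence $L_\Phi$, being generated by the projection onto these eigenlines, is parallel.

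The main obstacle I anticipate is handling the type decomposition correctly and ruling out an off-diagonal contribution: differentiating $[\Phi\wedge\gamma]=0$ produces a term involving $d_A\Phi$ whose $(1,1)$-component is not obviously zero (only $\bar\partial_A\Phi=0$ is given), so I cannot immediately conclude $[\Phi\wedge d_A\gamma]=0$ without also using the first Hitchin equation to control $\partial_A\Phi$ and the commutator $[\Phi\wedge\Phi^*]$. I expect the resolution to come from combining $\bar\partial_A\Phi=0$ with the regularity of $\Phi$ (so that the equation $[\Phi\wedge\beta]=0$ has solution space exactly $L_\Phi$, no larger), which pins down $d_A\gamma$ in each type separately and yields that $L_\Phi$ is $d_A$-parallel on all of $\Sigma_R^\times$.
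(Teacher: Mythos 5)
Your opening strategy is the right one and is exactly the paper's: differentiate the defining relation $[\Phi\wedge\gamma]=0$ with the Leibniz rule and use $\bar\partial_A\Phi=0$. But you stop short of closing the argument because of an obstacle that is not actually there. You worry that $d_A\Phi$ has a component not controlled by the second Hitchin equation; in fact $d_A\Phi=\partial_A\Phi+\bar\partial_A\Phi$, the $(1,1)$-part \emph{is} $\bar\partial_A\Phi=0$ by the second equation, and the $(2,0)$-part $\partial_A\Phi$ vanishes identically for degree reasons, since $\Sigma_R$ is a complex curve and carries no nonzero $(2,0+1)$-forms beyond type $(1,1)$ --- there are no nontrivial $(2,0)$-forms at all. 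Hence $d_A\Phi=0$ outright, and the Leibniz rule gives
\begin{equation*}
[\Phi\wedge d_A\gamma]=-d_A[\Phi\wedge\gamma]+[d_A\Phi\wedge\gamma]=0,
\end{equation*}
which is the entire proof. No appeal to the first Hitchin equation, to regularity or simplicity of $\Phi$ (assumption (A3)), or to the structure of the centralizer is needed.

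The fallback route you sketch in your third paragraph is moreover not correct as stated: for a genuine coupled solution on $\Sigma_R$ the eigenlines of $\Phi$ are in general neither orthogonal nor $A$-parallel; the identity $[\Phi\wedge\Phi^{\ast}]=0$ and the resulting parallel orthogonal splitting hold only for limiting configurations, where the equations decouple. So that argument would prove the proposition only in the decoupled case, which is precisely the case the paper already handled in \textsection\ref{sec:fibrationlargeHiggs}. The single missing idea in your write-up is the degree observation $\partial_A\Phi=0$; once you have it, your first paragraph already contains the whole proof.
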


\begin{proof}
Let $\gamma\in\Omega^0(\Sigma_R^{\times},L_{\Phi})$ be a smooth section of $ L_{\Phi}$, i.e.~$[\Phi\wedge\gamma]=0$. Then $d_A\gamma$ is again a section of $L_{\Phi}$. Indeed, since $\bar\partial_A\Phi=0$ and $\partial_A\Phi=0$ by degree reasons, hence $d_A\Phi=0$, it follows that
\begin{equation*}
[\Phi\wedge d_A\gamma]=-d_A[\Phi\wedge\gamma]+[d_A\Phi\wedge\gamma]=0,
\end{equation*}
as asserted.
\end{proof}

In view of Proposition \ref{prop:linebundlepar} it follows that the unitary connection $A$ induces a connection (also denoted by $A$) on the line bundle $L_{\Phi}\to\Sigma_R^{\times}$ with associated operator $d_A=\partial_A+\bar\partial_A$ acting on $L_{\Phi}$-valued differential forms. Furthermore, from Lemma \ref{lem:indexlimtorus}  and  Proposition \ref{prop:isomorlinebundles}   we deduce that for $R>0$ the Fredholm index of the elliptic operator 
\begin{equation}\label{eq:delbaroperator}
\bar\partial_A\colon \Omega^{1,0}(\Sigma_R^\times,L_{\Phi})\to  \Omega^{1,1}(\Sigma_R^\times,L_{\Phi})
\end{equation}
equals $6(\gamma-1)$.\\
\medskip\\
We next study the behaviour of the operator family $\bar\partial_A$ in the limit $R\searrow0$. This corresponds to the passage from a family of elliptic operators with smooth coefficients for $R>0$ to the singular limiting operator $\bar\partial_A$ on $\Sigma_0^{\times}$,  a so-called   $b$-operator (cf.~  \cite{me93}). Following \cite{ss88}, a natural domain for the limiting-operator $\bar\partial_A$ is the space of sections $\gamma\in  \Omega^{1,0}(\Sigma_0^\times,L_{\Phi})$ which take the following form with respect to the  local holomorphic coordinates $z=re^{i\theta}$ and $w=se^{i\psi}$  near   $p\in\mathfrak p$ as introduced in  \textsection \ref{subsec:limitnodedsurface}.  Namely,
\begin{equation*}
\gamma(z)=u(z)\frac{dz}{z},\qquad \textrm{and}\qquad \gamma(w)=v(w)\frac{dw}{w}
\end{equation*}
for some matrix-valued functions $u\in L^2(dr\wedge d\theta)$ and  $v\in L^2(ds\wedge d\psi)$, respectively, which satisfy the matching condition $u(0)=-v(0)$. This choice of domain reflects the symmetry assumption made in (A2).\\
\medskip\\
We now suppose that  $R\geq0$ is sufficiently small. Recalling that by a result due to Biquard and Boalch (cf.~\cite{bibo04} and also \cite[Lemma 3.1]{sw15} for a description in terms of the present setup)   every solution $(A,\Phi)$ is asymptotically close to some model solution  $(A_R^{\mod},\Phi_R^{\mod})$, where 
\begin{equation*}
\Phi_R^{\mod}=\begin{pmatrix}C&0\\0&-C\end{pmatrix}\,\frac{dz}{z}\qquad (C\neq0),
\end{equation*}
we can pass to a local holomorphic frame in which the Higgs field $\Phi$ is diagonal in some neighbourhood of $\mathfrak p$. Since by Proposition \ref{prop:linebundlepar} $L_{\Phi}$ is $A$-parallel,  the $(0,1)$-part of the connection  $A$ must also be diagonal  with respect to this frame, and therefore   $\bar\partial_A$ acts as the standard (untwisted) Dolbeault operator   $\bar\partial$ near each node.\\
\medskip\\
From here on we may appeal to the analysis of the Dolbeault operator on degenerating Riemannian surfaces which has been carried out by   Seeley and Singer  \cite{ss88}.  It relies on the following functional analytic result due to Cordes and Labrousse \cite{cola63}. Let $H_1$ and $H_2$ be Hilbert spaces, and $D_t$, $t\in\R$, be a family of closed operators $H_1\supset\operatorname{dom} D_t\to H_2$ of which we assume the following. Let $G_t\subset\operatorname{dom}(D_t)\oplus H_2$ denote the graph of $D_t$ and let $P_t\colon H_1\oplus H_2\to G_t$ be the orthogonal projection. The family $D_t$ is called {\bf{graph-continuous}} at $t_0\in\R$ if $P_t$ is norm-continuous at $t_0$.

\begin{lemma}\label{lem:graphcont}
Suppose   $D_t$ is a  family of operators, graph-continuous at $t=0$,   such that $D_0$ is Fredholm. Then for all sufficiently small $t$, the operator $D_t$ is Fredholm as well and $\ind D_t=\ind D_0$.  
\end{lemma}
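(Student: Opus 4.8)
The plan is to convert the statement about the closed, variably-domained operators $D_t$ into a statement about \emph{bounded} operators on (essentially) fixed Hilbert spaces, where stability of the Fredholm index under norm-perturbations is classical. The first step is to replace $D_t$ by the restriction of the coordinate projection $\Pi_2\colon H_1\oplus H_2\to H_2$ to the graph. Writing $T_t:=\Pi_2|_{G_t}\colon G_t\to H_2$, where $G_t$ carries the Hilbert-space structure inherited from $H_1\oplus H_2$ (which on $\operatorname{dom}D_t$ is exactly the graph norm, using that $D_t$ is closed so $G_t$ is a closed subspace), one checks directly that $\ker T_t=\{(x,0)\mid x\in\ker D_t\}$ and $\operatorname{ran}T_t=\operatorname{ran}D_t$. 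Hence $T_t$ is Fredholm if and only if $D_t$ is, and in that case $\ind T_t=\ind D_t$. This reduces everything to stability of the family $T_t$.

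The difficulty is that the $T_t$ act on the \emph{moving} domains $G_t$ and so cannot be compared directly; this is where graph-continuity enters, since by definition it gives $\|P_t-P_0\|\to0$ as $t\to0$. By the standard lemma on pairs of orthogonal projections, $\|P_t-P_0\|<1$ implies that $P_t|_{G_0}\colon G_0\to G_t$ is a Banach-space isomorphism onto $\operatorname{ran}P_t=G_t$, with inverse uniformly bounded for small $t$. I would then transport $T_t$ to the fixed space $G_0$ by setting $S_t:=\Pi_2\,P_t\,\iota_0\colon G_0\to H_2$, where $\iota_0\colon G_0\hookrightarrow H_1\oplus H_2$ is the inclusion. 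The factorization $S_t=T_t\circ(P_t|_{G_0})$ through the isomorphism $P_t|_{G_0}$ shows that $S_t$ is Fredholm precisely when $T_t$ is, with $\ind S_t=\ind T_t$; at the same time $\|S_t-S_0\|=\|\Pi_2(P_t-P_0)\iota_0\|\le\|P_t-P_0\|\to0$, while $S_0=\Pi_2|_{G_0}=T_0$.

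It then remains to invoke the classical fact that the Fredholm operators form an open subset of the bounded operators $G_0\to H_2$ on which the index is locally constant. Since $D_0$ is Fredholm, so is $T_0=S_0$; hence $S_t$ is Fredholm with $\ind S_t=\ind S_0$ for all sufficiently small $t$. Chaining the identities yields $\ind D_t=\ind T_t=\ind S_t=\ind S_0=\ind T_0=\ind D_0$, which is the claim. The one genuinely substantial point, which I would isolate as a preliminary lemma, is the second step: that norm-proximity of the orthogonal projections produces the isomorphisms $P_t|_{G_0}\colon G_0\to G_t$ and hence a legitimate, index-preserving identification of the moving graphs with the fixed model $G_0$. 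Everything else — the identification of $D_t$ with $T_t$, the norm-continuity $S_t\to S_0$, and the openness of the Fredholm locus — is routine.
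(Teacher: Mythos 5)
Your argument is correct and complete. Note that the paper does not actually prove this lemma --- it simply refers to \cite[\textsection 2]{ss88}, which in turn rests on the Cordes--Labrousse theorem on the invariance of the index in the gap metric of closed operators; your reduction (pass to $T_t=\Pi_2|_{G_t}$, use $\|P_t-P_0\|<1$ to identify the moving graphs $G_t$ with $G_0$ via $P_t|_{G_0}$, then invoke openness of the Fredholm locus and local constancy of the index for bounded operators) is essentially the standard proof of that classical result, so you have supplied a self-contained argument where the paper only cites.
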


\begin{proof}
For a proof we refer to \cite[\textsection 2]{ss88}.
\end{proof}

The main step now is to show graph-continuity of the $R$-dependent family of operators $\bar\partial_A$ on $\Sigma_R$ at $R=0$. This  analysis has been carried out in \cite{ss88} for the Dolbeault operator  $\bar\partial$ acting on the canonical line bundle $K_{\Sigma}\cong T_{\C}^{\ast}\Sigma$.  It consists of several  steps, the first of which being the construction of a local parametrix of $\bar\partial$ on each  `neck'   $\mathcal C^-\cup\mathcal C^+$ of $\Sigma_R$ (cf.~Figure \ref{fig:2}). This    family of local parametrices is then shown to converge in a suitable sense as $R\searrow0$ to the local parametrix on $\Sigma_0$.  These local parametrices are then glued to  a $R$-independent interior parametrix to obtain graph-continuity of the family of operators $\bar\partial$ and the Fredholm property in the limit. This scheme of proof carries over without any serious changes to the family of operators $\bar\partial_A$ considered here. We therefore conclude that the latter family is graph-continuous and the limiting operator $\bar\partial_A$ on $\Sigma_0$ is Fredholm. Thus  Lemma \ref{lem:graphcont} implies the following result.

\begin{theorem}
Let $(A,\Phi)$ be a solution of the self-duality equations  \eqref{eq:hitequ} on the Riemann surface $\Sigma_R$, where $R\geq0$. Then the  operator $\bar\partial_A$ in \eqref{eq:delbaroperator}     is a  Fredholm operator  of index $6(\gamma-1)$.
\end{theorem}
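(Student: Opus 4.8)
The plan is to reduce the theorem to two facts that are already in place: the computation of the index for $R>0$ and the stability of the Fredholm index under the degeneration $R\searrow 0$, the latter being an application of the abstract Cordes--Labrousse perturbation result (Lemma~\ref{lem:graphcont}). First I would treat the two cases $R>0$ and $R=0$ separately but uniformly. For $R>0$ the surface $\Sigma_R^\times$ is smooth and $L_\Phi$ is a genuine complex line bundle that, by Proposition~\ref{prop:linebundlepar}, carries the connection induced by $A$. The operator in \eqref{eq:delbaroperator} is then a standard elliptic Dolbeault-type operator on a compact-with-boundary (equivalently punctured) surface, so its Fredholm property is classical; its index is computed by combining Proposition~\ref{prop:isomorlinebundles}, which identifies $L_\Phi$ with $L_{\Phi_\infty}$ as complex bundles over $\Sigma_R^\times$, with the dimension count of Lemma~\ref{lem:indexlimtorus}. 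The bijection between $\ker(d_A+d_A^*)$ and $\ker\bar\partial_A$ established earlier in this section, together with the identification of the gauged deformation space $H^1(\Sigma^\times;L_{\Phi_\infty}^{\R})$ of dimension $6(\gamma-1)$, yields that the index equals $6(\gamma-1)$ for every $R>0$.

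Next I would handle the limiting operator. Following the asymptotic description of Biquard--Boalch, near each node $p\in\mathfrak p$ I would pass to a local holomorphic frame in which $\Phi$ is diagonal; Proposition~\ref{prop:linebundlepar} forces the $(0,1)$-part of $A$ to be diagonal in this frame as well, so that $\bar\partial_A$ reduces to the untwisted $\bar\partial$ near each node. The limiting operator on $\Sigma_0^\times$ is then a $b$-operator in the sense of \cite{me93}, and its domain is fixed by the matching condition $u(0)=-v(0)$ dictated by assumption (A2), exactly as in the Seeley--Singer setup \cite{ss88}. The Fredholm property of $\bar\partial_A$ on $\Sigma_0$ then follows from the analysis of \cite{ss88}, applied to this $b$-operator on the chosen domain.

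The main step, and the one I expect to be the genuine obstacle, is establishing graph-continuity of the $R$-dependent family $\bar\partial_A$ at $R=0$ in the sense of Cordes--Labrousse. The strategy is to construct, on each neck $\mathcal C^-\cup\mathcal C^+$, a local parametrix for $\bar\partial_A$ and to show that this family of neck parametrices converges as $R\searrow 0$ to the corresponding parametrix on $\Sigma_0$; these are then glued to an $R$-independent interior parametrix. Because the reduction above shows $\bar\partial_A$ coincides with the untwisted $\bar\partial$ near each node, the delicate neck estimates of Seeley--Singer carry over essentially verbatim, the only new input being the matching condition encoding (A2). The hard part is therefore verifying that the neck analysis of \cite{ss88} applies without serious modification to the present bundle $L_\Phi$; once graph-continuity is in hand, Lemma~\ref{lem:graphcont} gives both the Fredholm property of the limiting operator and the equality $\ind\bar\partial_A|_{R=0}=\ind\bar\partial_A|_{R>0}=6(\gamma-1)$, completing the proof for all $R\geq 0$.
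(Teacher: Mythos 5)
Your proposal follows the paper's own argument essentially step for step: the index $6(\gamma-1)$ for $R>0$ via Proposition~\ref{prop:isomorlinebundles} and Lemma~\ref{lem:indexlimtorus}, the reduction of $\bar\partial_A$ to the untwisted Dolbeault operator near each node using the Biquard--Boalch asymptotics and Proposition~\ref{prop:linebundlepar}, the Seeley--Singer parametrix construction on the necks to obtain graph-continuity, and finally Lemma~\ref{lem:graphcont} to transfer the Fredholm property and index to $R=0$. This is the same route the paper takes, correctly identifying the verification that the Seeley--Singer neck analysis carries over as the main technical point.
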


\section{Concluding remarks}\label{sec:conclremarks}

In this note we did not discuss the behaviour of the full linearized operator \eqref{eq:linoperator}   in the limit $R\searrow0$. This analysis can be carried out along similar lines, showing the stability of the Fredholm index of $D_{(A,\Phi)}$ in this limit. Such a result can then be used to show bijectivity of the gluing map which assigns to a singular solution $(A_0,\Phi_0)$ of the self-duality equations on $\Sigma_0$ a smooth solution on each nearby surface $\Sigma_R$, cf.~Theorem \ref{thm:mainthm}.\\
\medskip\\
Concerning properties of the $L^2$ metric on the family of moduli spaces, it is worthwhile to point out the difference to the situation considered  in \textsection \ref{sec:fibrationlargeHiggs}. There it turned out that restriction of the $L^2$ metric to the fibres of the Hitchin  fibration persists in the limit $t\to\infty$  and induces a flat metric on each   limiting torus  $\T_q$. This is in contrast to what we encounter in the case of degenerating Riemann surfaces, where the $L^2$ metric is not defined on the limiting tori. Indeed, any $\alpha$ in the domain of $\bar\partial_A$  satisfies  the decay condition
\begin{equation*}
\lim_{z\to p}\alpha(z)=u_{\ast}\frac{dz}{z}\qquad (p\in\mathfrak p)
\end{equation*}
for some    $u_{\ast}$, the decay being at a polynomial rate in $r=\left| z\right|$. If  $u_{\ast}\neq0$ then  $\alpha$ does not have finite $L^2$ norm since $\left| u_{\ast}/z\right|^2$ is not integrable with respect to the measure $r\, dr\wedge d\theta$. Conversely, if $u_{\ast}=0$ then $\alpha$ has finite $L^2$ norm. The subspace  of such $1$-forms $\alpha$ equals the kernel of the operator  $\bar\partial_A$ under   the so-called Atiyah-Patodi-Singer (APS) boundary conditions. Therefore the  APS index theorem permits us to determine the   Fredholm index   in this case. Using  the well-known gluing properties of the APS index, it   can explicitly be computed   from the  index of $\bar\partial_A$  on a smooth surface (where it equals $6(\gamma-1)$) and the kernel of $\bar\partial_A$ acting on cross-sections of the cylindrical ends of $\Sigma_0\setminus\mathfrak p$, cf.~\cite{mapi98}.



\end{document}